\documentclass[11pt]{article}

\usepackage[margin=1in]{geometry}
\usepackage[T1]{fontenc}
\usepackage[utf8]{inputenc}
\usepackage{amsmath,amssymb,amsthm,mathtools}
\usepackage{enumitem}
\usepackage{hyperref}

\hypersetup{
    colorlinks=true,
    linkcolor=blue,
    citecolor=blue,
    urlcolor=blue
}

\newtheorem{theorem}{Theorem}
\newtheorem{lemma}[theorem]{Lemma}
\newtheorem{corollary}[theorem]{Corollary}
\theoremstyle{definition}
\newtheorem{definition}[theorem]{Definition}
\theoremstyle{remark}

\newcommand{\R}{\mathbb{R}}
\newcommand{\Z}{\mathbb{Z}}
\newcommand{\dd}{\mathrm{d}}
\newcommand{\Div}{\operatorname{div}}

\title{Finite Three-Colourable $(0,2)$-Graphs Are Bipartite}
\author{Christopher Williamson}
\date{\today}

\begin{document}
\maketitle

\begin{abstract}
A theorem of Payan says that a cubelike graph cannot have chromatic number exactly three.  A nearby question, usually discussed as Payan's finite $(0,2)$-graph question, asks whether a finite graph in which every two distinct vertices have either zero or two common neighbours can have chromatic number exactly three.  The finite hypothesis is meaningful: infinite three-chromatic $(0,2)$-graphs can be constructed \cite{Payan1992}.

We prove that every finite three-colourable $(0,2)$-graph is bipartite.  Thus, no finite $(0,2)$-graph has chromatic number exactly three.
\end{abstract}

\section{Background and statement}

All graphs in this note are finite, simple, and undirected unless explicitly stated otherwise.  For a vertex $v$ of a graph $G$, write $N(v)$ for its neighbourhood.

\begin{definition}
A graph $G$ is a \emph{$(0,2)$-graph} if, for all distinct vertices $x,y$,
\[
    |N(x)\cap N(y)|\in\{0,2\}.
\]
\end{definition}

Payan proved that no cubelike graph has chromatic number exactly three \cite{Payan1992} and asked whether the same holds for finite $(0,2)$-graphs. This problem is also recorded in the graph theory open problems compilation \cite{GraphConjectures2026}.  Payan notes that infinite $(0,2)$-graphs of chromatic number three can be constructed.

The purpose of this note is to prove the following theorem.

\begin{theorem}\label{thm:main}
If $G$ is a finite three-colourable $(0,2)$-graph, then $G$ is bipartite.  Consequently, no finite $(0,2)$-graph has chromatic number exactly three.
\end{theorem}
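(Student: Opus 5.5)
The plan is to turn a proper $3$-colouring into an integer-valued ``height'' function and show that the height changes by exactly $\pm1$ along every edge.  Such a height function immediately gives a proper $2$-colouring, namely its parity.

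\emph{Setup.}  Let $c\colon V(G)\to\Z/3\Z$ be a proper colouring.  Orient each edge $uv$ from $u$ to $v$ when $c(v)=c(u)+1$, and let $\tau(u,v)\in\{\pm1\}$ be the associated tension, so that $\tau(u,v)\equiv c(v)-c(u)\pmod 3$.  If every cycle $C$ satisfies $\tau(C)=0$ in $\Z$, and not merely modulo $3$, then $\tau$ integrates to $h\colon V\to\Z$ with $|h(u)-h(v)|=1$ on edges, and $h \bmod 2$ is a proper $2$-colouring.  An odd cycle $C$ cannot have $\tau(C)=0$, since it is a sum of an odd number of $\pm1$'s.  So the whole theorem reduces to showing that the winding $\tau(C)/3$ vanishes on every cycle.

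\emph{Local structure.}  For a vertex $v$ and a neighbour $u$, the common neighbours of $u$ and $v$ are exactly the neighbours of $u$ inside $N(v)$.  Hence $G[N(v)]$ has all degrees in $\{0,2\}$, so it is a disjoint union of cycles and isolated vertices.  The colouring restricted to $N(v)$ avoids $c(v)$, so these cycles are even.  Similarly, a path $x\!-\!z\!-\!y$ with $x\neq y$ forces a second common neighbour $z'$ of $x$ and $y$, giving a $4$-cycle $xzyz'$.  On any $4$-cycle, $\tau$ is a sum of four $\pm1$'s that is $\equiv 0\pmod 3$, so it equals $0$.  Thus $4$-cycles have winding zero.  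Triangles have winding $\pm1$.  A triangle $vuw$ lies in the neighbourhood cycle of $v$ through $u,w$, and that even cycle in $N(v)$ decomposes, together with $v$, into triangles $v u_i u_{i+1}$ whose windings are controlled by the alternating colours on the even cycle.

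\emph{Main step.}  I would show that $\tau$ is a coboundary over $\Z$ by a global finite argument.  Since every $2$-path closes to a $4$-cycle or a triangle pair, the cycle space over $\Z$ should be generated by $4$-cycles and triangles.  This is essentially the standard fact that graphs in which every $2$-path lies in a short cycle have cycle space generated by short cycles; I would check it via the layered BFS structure from a base vertex.  It then suffices to show that no triangle exists.  For this I would use the divergence of the orientation: set $\Div(v)=d^+(v)-d^-(v)$.  Recall that $(0,2)$-graphs are regular, say of degree $k$.  Summing $\Div$ over $V$ gives $0$, and counting $2$-paths through each colour class gives linear relations among the class sizes and the $\Div$ values.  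These relations use that each pair at distance $2$ has exactly two common neighbours, so the number of $2$-paths from $v$ to vertices of each colour is fixed.

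Finiteness enters exactly here.  The claim is that these counts force $\Div\equiv 0$ and then force every neighbourhood cycle to be absent.  The point is that a neighbourhood cycle around $v$ contributes unbalanced windings, because each triangle at $v$ is a $+1$ or a $-1$ according to the orientation, and this imbalance would have to be compensated elsewhere, which is impossible in a finite graph when everything is summed.

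\emph{Expected obstacle.}  The hardest step is this last one: proving that a finite $3$-coloured $(0,2)$-graph is triangle-free, equivalently that every $G[N(v)]$ is edgeless.  This is where Payan's infinite examples must fail, so the argument has to use a genuine global sum.  My first attempt would be a double count of oriented triangles versus oriented $4$-cycles through each vertex, weighted by colour.  I would exploit that the orientation around each even neighbourhood cycle alternates between ``into $v$'' and ``out of $v$'', and look for a positive quantity whose total over $V$ must be zero.
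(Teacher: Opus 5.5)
\textbf{There are genuine gaps.} Your reduction is the same one the paper uses: show that $\tau$ sums to $0$ around every cycle, so odd cycles are impossible. Your local observations are also correct: $G[N(v)]$ is a union of even cycles and isolated vertices, and $\tau$ vanishes on every $4$-cycle. But the two steps that carry the weight are missing or wrong.

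\emph{The cycle-space step.} The ``standard fact'' that the cycle space over $\Z$ is generated by $4$-cycles and triangles is false for $(0,2)$-graphs. The Clebsch graph (the folded $5$-cube, strongly regular with parameters $(16,5,0,2)$) is a triangle-free $(0,2)$-graph containing $5$-cycles. Adding $\pm$ a $4$-cycle changes the total coefficient sum of an integer $1$-chain by an even number, while a $5$-cycle has odd total. So no $5$-cycle lies in the $\Z$-span (or the $\Z/2\Z$-span) of the $4$-cycles.

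A BFS push-down check cannot repair this. It stalls exactly on edges lying inside a distance layer, and such edges exist precisely when the component is non-bipartite, which is what you are trying to exclude. What you actually need, and what suffices because $\tau$ is integer-valued, is the real-coefficient statement: on a finite triangle-free $(0,2)$-graph, every edge label with zero sum around all $4$-cycles is a gradient. This is the genuinely global step. The paper proves it as follows:
\begin{itemize}
\item Minimise $\|\alpha+\dd f\|$ over vertex functions $f$. The minimiser $\omega$ is still square-closed and is divergence-free.
\item Take an edge $(u,v)$ with maximal $|\omega|=W>0$. Unique square completion gives a bijection $N(u)\setminus\{v\}\to N(v)\setminus\{u\}$, $a\mapsto b(a)$.
\item The square relations together with zero divergence at $u$ and $v$ force the $d-1$ labels $\omega(a,b(a))$, each of absolute value at most $W$, to sum to $(d+1)W$. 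This contradiction gives $\omega=0$.
\end{itemize}

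\emph{The triangle-free step.} You give no argument, and the route you sketch aims at a false intermediate claim. $\Div\equiv 0$ never holds for a properly $3$-coloured finite $(0,2)$-graph with an edge; already $C_4$ coloured $0,1,0,1$ has $\Div=\pm 2$. In fact the cyclic orientation is always acyclic, so it has a source of positive out-degree.

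The missing idea is a local injection. Suppose $u\to v$, and let $x$ be an in-neighbour of $u$, so $c(x)=c(u)-1$. The second common neighbour $y$ of $x$ and $v$ is forced to have colour $c(u)$, so $y$ is an in-neighbour of $v$. The map $x\mapsto y$ is injective, since otherwise $u$ and $y$ would have three common neighbours, and it misses $u$. Hence in-degree strictly increases along every arc. This rules out directed triangles, and every properly $3$-coloured triangle is directed.

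So your assessment of the difficulty is inverted. Triangle-freeness is a short argument needing only finite in-degrees. The passage from ``zero on $4$-cycles'' to ``zero on all cycles'' is where finiteness of $G$ is used globally.
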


We use colours in $\Z/3\Z$.  Thus a proper three-colouring is a map
\[
    c:V(G)\to \Z/3\Z
\]
with $c(x)\ne c(y)$ whenever $x$ and $y$ are adjacent.

\section{The triangle-free reduction}

Given a proper three-colouring $c$, orient every edge cyclically: orient $u v$ as $u\to v$ if
\[
    c(v)=c(u)+1\pmod 3.
\]
For a vertex $v$, let $p(v)$ be the number of incoming edges at $v$ in this orientation.

\begin{lemma}\label{lem:p-increases}
Let $G$ be a finite $(0,2)$-graph with a proper three-colouring and the induced cyclic orientation.  If $u\to v$, then
\[
    p(v)\ge p(u)+1.
\]
\end{lemma}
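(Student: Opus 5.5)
The plan is to compare in-neighbours directly. I will build an injection from the in-neighbours of $u$ into the in-neighbours of $v$ other than $u$. Since $u$ itself is an in-neighbour of $v$, this gives $p(v)\ge p(u)+1$.

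Write $c(u)=a$, so $c(v)=a+1$. Under the cyclic orientation, the in-neighbours of $u$ are exactly the neighbours of $u$ of colour $a+2$. The in-neighbours of $v$ are exactly the neighbours of $v$ of colour $a$, and $u$ is one of them.

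\emph{Construction of the map.} Fix an in-neighbour $w$ of $u$, so $c(w)=a+2$. Then $w\neq v$, since their colours differ. The vertex $u$ is a common neighbour of $w$ and $v$, so by the $(0,2)$ hypothesis $w$ and $v$ have exactly one further common neighbour $x_w\neq u$. This holds whether or not $w$ and $v$ are adjacent, so triangles need no separate treatment. The vertex $x_w$ is adjacent to $w$ (colour $a+2$) and to $v$ (colour $a+1$), so properness forces $c(x_w)=a$. Hence $x_w$ is an in-neighbour of $v$ distinct from $u$.

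\emph{Injectivity.} This is the only step where the $(0,2)$ condition is used a second time, and I expect it to be the crux. Suppose $x_w=x_{w'}=x$ for distinct in-neighbours $w,w'$ of $u$. Then $x\neq u$, and the common neighbours of $x$ and $u$ include $w$, $w'$ and $v$:
\begin{itemize}
\item $w$ and $w'$ are adjacent to both $u$ and $x$ by construction;
\item $v$ is adjacent to $u$ by assumption and to $x$ by construction.
\end{itemize}
These three vertices are pairwise distinct, because $w\neq w'$ and $v$ has colour $a+1$ while $w,w'$ have colour $a+2$. So $|N(x)\cap N(u)|\ge 3$, which contradicts the $(0,2)$ hypothesis.

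Therefore $w\mapsto x_w$ is an injection from the set of in-neighbours of $u$ into the in-neighbours of $v$ other than $u$. Counting $u$ as well gives $p(v)\ge p(u)+1$.
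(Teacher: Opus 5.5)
Your proposal is correct and is essentially the paper's proof: it uses the same map from a predecessor $w$ of $u$ to the second common neighbour of $w$ and $v$, the same colour-forcing step, and the same injectivity contradiction from the three common neighbours $w,w',v$ of $u$ and $x$. Your explicit checks that $w\neq v$, that $x\neq u$, and that $w,w',v$ are pairwise distinct are details the paper leaves implicit.
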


\begin{proof}
Let $c(u)=i$, so $c(v)=i+1$.  For every predecessor $x$ of $u$, we have $c(x)=i-1$.  The vertices $x$ and $v$ have $u$ as a common neighbour, so by the $(0,2)$ condition they have exactly one further common neighbour; call it $y$.

Since $y$ is adjacent to $x$, its colour is either $i$ or $i+1$.  Since $y$ is adjacent to $v$, its colour is either $i-1$ or $i$.  Hence $c(y)=i$, so $y\to v$.

This sends each predecessor $x$ of $u$ to a predecessor $y$ of $v$.  The map is injective: if distinct predecessors $x_1,x_2$ of $u$ were sent to the same $y$, then $u$ and $y$ would have at least three common neighbours, namely $x_1,x_2,v$, contradicting the $(0,2)$ condition.  Finally, $u$ itself is also a predecessor of $v$, and it is not in the image of the map.  Thus $v$ has at least $p(u)+1$ predecessors.
\end{proof}

\begin{corollary}\label{cor:triangle-free}
A finite three-colourable $(0,2)$-graph is triangle-free.
\end{corollary}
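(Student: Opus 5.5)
The plan is to combine Lemma~\ref{lem:p-increases} with finiteness.  Suppose $G$ is a finite $(0,2)$-graph with a proper three-colouring $c$, and suppose for contradiction that $G$ contains a triangle $u,v,w$.  Since $c$ is proper, the three vertices of the triangle receive the three distinct colours of $\Z/3\Z$.  After relabelling we may take $c(u)=i$, $c(v)=i+1$ and $c(w)=i+2=i-1$.

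Next we read off the induced cyclic orientation on the triangle.  By definition it gives $u\to v$, $v\to w$ and $w\to u$, because each step raises the colour by $1 \pmod 3$.  So the triangle is a directed $3$-cycle in the orientation.

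Now apply Lemma~\ref{lem:p-increases} to each of the three arcs:
\[
p(v)\ge p(u)+1,\qquad p(w)\ge p(v)+1,\qquad p(u)\ge p(w)+1.
\]
Chaining these gives $p(u)\ge p(u)+3$.  This is impossible, since $p(u)$ is a finite nonnegative integer: it counts incoming edges at $u$, and $G$ is finite.  This contradiction shows $G$ has no triangle.

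More generally, the same chaining argument shows the orientation has no directed cycle at all, which will presumably be used later.  I do not expect a real obstacle here; the only point needing care is the observation that a properly coloured triangle must use all three colours and is therefore cyclically oriented.  Finiteness enters only through $p(u)$ being finite, which is exactly what fails in Payan's infinite examples.
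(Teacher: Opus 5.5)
Your proof is correct and is the same as the paper's: a properly coloured triangle uses all three colours, so the cyclic orientation makes it a directed $3$-cycle, and chaining Lemma~\ref{lem:p-increases} around it gives the impossible inequality $p(u)\ge p(u)+3$. Your closing remark about Payan's infinite examples is unsupported speculation and is not needed; the argument itself only uses that in-degrees are finite.
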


\begin{proof}
In a properly three-coloured triangle, the three colours are $0,1,2$, so the cyclic orientation makes the triangle a directed cycle.  Lemma~\ref{lem:p-increases} would then give a strict increase of $p$ around a closed directed cycle, which is impossible.
\end{proof}

We shall also need the following immediate consequence.

\begin{lemma}\label{lem:rectagraph-property}
Let $G$ be a triangle-free $(0,2)$-graph.  Then every path of length two lies in a unique $4$-cycle.  More precisely, if $a-u-v$ is a path of length two, then there is a unique vertex $b$ such that
\[
    a-u-v-b-a
\]
is a $4$-cycle.
\end{lemma}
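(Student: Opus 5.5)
The plan is to apply the $(0,2)$ condition to the two endpoints $a$ and $v$ of the path, and then use triangle-freeness to rule out the degenerate configurations.

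\emph{Existence.} Let $a-u-v$ be a path of length two, so $a\neq v$. Since $u$ is a common neighbour of $a$ and $v$, the set $N(a)\cap N(v)$ is nonempty. By the $(0,2)$ condition it therefore has exactly two elements, $u$ and some $b\neq u$. This $b$ is adjacent to both $a$ and $v$.

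\emph{The closed walk is a $4$-cycle.} We check that $a,u,v,b$ are pairwise distinct and that $a-u-v-b-a$ uses only edges.
\begin{itemize}[nosep]
\item $a\neq v$ holds because $a-u-v$ is a path.
\item $b\neq u$ holds by the choice of $b$.
\item $b\neq a$ and $b\neq v$ hold because $b$ is adjacent to both $a$ and $v$ and the graph has no loops.
\end{itemize}
The edges $au$, $uv$, $vb$ and $ba$ are all present, so $a-u-v-b-a$ is a $4$-cycle. Triangle-freeness also gives that $a$ and $v$ are non-adjacent, since otherwise $a,u,v$ would form a triangle. So the cycle has no chord $av$, although this is not needed for the statement.

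\emph{Uniqueness.} Suppose $b'$ is any vertex such that $a-u-v-b'-a$ is a $4$-cycle. Then $b'$ is adjacent to both $a$ and $v$, so $b'\in N(a)\cap N(v)=\{u,b\}$. A $4$-cycle has distinct vertices, so $b'\neq u$, and hence $b'=b$.

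There is no real obstacle here; the only care needed is the distinctness check. Triangle-freeness matters mainly for how the lemma is used later: it ensures the $4$-cycles obtained this way are induced, and that a path of length two really joins two non-adjacent vertices, which is what makes the $(0,2)$ condition produce an actual square.
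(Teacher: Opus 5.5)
Your proof is correct and is essentially the paper's argument: take $b$ to be the second common neighbour of $a$ and $v$ supplied by the $(0,2)$ condition, and get uniqueness from $|N(a)\cap N(v)|=2$ together with $b'\neq u$. Your distinctness check is slightly more careful than the paper's, which attributes $b\neq a,v$ to triangle-freeness. As you note, it actually follows just from $b$ being adjacent to both $a$ and $v$ in a simple graph.
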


\begin{proof}
The vertices $a$ and $v$ have $u$ as a common neighbour, so by the $(0,2)$ condition they have exactly two common neighbours.  Let $b$ be the common neighbour different from $u$.  Then $b$ is adjacent to both $a$ and $v$.  Triangle-freeness ensures that $b$ is distinct from $a$ and $v$, so $a-u-v-b-a$ is a genuine $4$-cycle.  Uniqueness follows because $a$ and $v$ have only two common neighbours.
\end{proof}

Thus, after the triangle-free reduction, the graph is a rectagraph in the usual sense: no triangles, and each length-two path is contained in a unique square.

\section{A least-squares lemma for square-closed edge labels}

Let $G$ be a graph.  A real edge-label is a function $\omega$ on oriented edges such that
\[
    \omega(y,x)=-\omega(x,y)
\]
for every edge $xy$.  Its norm is
\[
    \|\omega\|^2:=\sum_{\{x,y\}\in E(G)} \omega(x,y)^2,
\]
where either orientation may be chosen for each edge.  This is well-defined because reversing an orientation changes a label by a sign but does not change its square.

If $f:V(G)\to\R$, define the gradient edge-label
\[
    \dd f(x,y):=f(y)-f(x).
\]
For an edge-label $\omega$, define its divergence at $x$ by
\[
    (\Div\omega)(x):=\sum_{y\sim x}\omega(x,y).
\]

\begin{lemma}\label{lem:least-squares}
Let $G$ be a finite triangle-free $(0,2)$-graph.  Suppose $\alpha$ is a real edge-label whose sum around every $4$-cycle is zero.  Then there is a function $F:V(G)\to\R$ such that
\[
    \alpha=\dd F.
\]
\end{lemma}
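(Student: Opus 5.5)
The plan is a Hodge-type argument: correct $\alpha$ by the best gradient approximation and show that what is left over must vanish.

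\emph{Step 1 (projection).} Since $G$ is finite, the map $F\mapsto \|\alpha-\dd F\|^2$ is a nonnegative quadratic function on $\R^{V(G)}$, so it attains its minimum at some $F$. Set $\beta:=\alpha-\dd F$. Varying $F$ at a single vertex $x$ and using the first-order condition gives $(\Div\beta)(x)=0$ for every $x$.

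Also, $\dd F$ sums to zero around every closed walk, since the sum telescopes. Hence $\beta$, like $\alpha$, sums to zero around every $4$-cycle. It therefore suffices to show that an edge-label $\beta$ which is divergence-free and sums to zero around every $4$-cycle is identically zero.

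\emph{Step 2 (averaging over parallel edges).} Fix an edge $uv$ and let $a$ range over $N(u)\setminus\{v\}$, a set of $d(u)-1$ vertices. By Lemma~\ref{lem:rectagraph-property}, the path $a-u-v$ lies in a unique $4$-cycle $u-v-b_a-a-u$. The square condition gives
\[
\beta(u,v)+\beta(v,b_a)+\beta(b_a,a)+\beta(a,u)=0 .
\]
We will sum this over all $a$. The key input is that $a\mapsto b_a$ is a bijection from $N(u)\setminus\{v\}$ onto $N(v)\setminus\{u\}$, which we justify as follows.
\begin{itemize}[nosep]
\item It is injective: if $b_{a_1}=b_{a_2}=b$, then $u$ and $b$ would have the three common neighbours $v,a_1,a_2$.
\item It lands in $N(v)\setminus\{u\}$: we have $b_a\ne u$ because $b_a$ is adjacent to $a\in N(u)$ and $G$ is triangle-free.
\item It is surjective: apply the same construction symmetrically from $v$'s side.
\end{itemize}
Using the bijection together with $\Div\beta=0$ at $u$ and at $v$, we get
\[
\sum_{a}\beta(a,u)=-\bigl((\Div\beta)(u)-\beta(u,v)\bigr)=\beta(u,v),
\qquad
\sum_{a}\beta(v,b_a)=(\Div\beta)(v)-\beta(v,u)=\beta(u,v).
\]
Summing the square identities over $a$ then yields
\[
(d(u)+1)\,\beta(u,v)=\sum_{a\in N(u)\setminus\{v\}}\beta(a,b_a).
\]
So $\beta(u,v)$ is $\tfrac{1}{d(u)+1}$ times a sum of $d(u)-1$ values of $\beta$ on the parallel edges $a\to b_a$.

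\emph{Step 3 (maximum principle).} Let $M:=\max|\beta|$ over all edges, and choose $uv$ attaining it. Step~2 gives
\[
M\le \frac{d(u)-1}{d(u)+1}\,M .
\]
Since $\frac{d(u)-1}{d(u)+1}<1$, this forces $M=0$, so $\beta\equiv0$ and $\alpha=\dd F$.

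The main obstacle is Step~2. We must check that the parallel-edge correspondence is a genuine bijection onto $N(v)\setminus\{u\}$, since that is what lets the divergence-free condition at both endpoints collapse the boundary terms. We must also track orientations carefully so that the parallel edges appear as $a\to b_a$ with a consistent sign. Everything else is routine finite-dimensional least squares.
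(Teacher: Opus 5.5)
Your proposal is correct and follows the paper's proof essentially step for step: a least-squares gradient correction, divergence-freeness from the first-order condition at each vertex, the square-induced bijection $N(u)\setminus\{v\}\to N(v)\setminus\{u\}$, the identity $\sum_a\beta(a,b_a)=(d(u)+1)\beta(u,v)$, and the maximum-principle contradiction (your formulation even absorbs the degree-one case, which the paper treats separately). One small slip: $b_a\neq u$ holds simply because $b_a$ is, by construction, the common neighbour of $a$ and $v$ other than $u$, so the four vertices of the square are distinct; triangle-freeness is not the reason, and it would only give the unneeded fact $b_a\notin N(u)$.
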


\begin{proof}
Consider all edge-labels of the form
\[
    \alpha+\dd f,
    \qquad f:V(G)\to\R.
\]
We first justify that there is one of minimum norm.

Choose one arbitrary orientation for each undirected edge of $G$.  If $G$ has $m$ edges, then a real edge-label is just a vector in $\R^m$: one real number for each chosen oriented edge.  Thus the space of all real edge-labels is a finite-dimensional Euclidean space.  The vertex functions $f:V(G)\to\R$ form another finite-dimensional vector space, and the map
\[
    f\longmapsto \dd f
\]
is linear.  Therefore the set of all gradients
\[
    B:=\{\dd f:f:V(G)\to\R\}
\]
is a linear subspace of the edge-label space.  In finite-dimensional Euclidean space every linear subspace is closed.  Hence
\[
    \alpha+B=\{\alpha+\dd f:f:V(G)\to\R\}
\]
is a closed affine subspace.

Since $\alpha$ itself lies in $\alpha+B$, the smallest possible norm is at most $\|\alpha\|$.  Thus no vector of norm greater than $\|\alpha\|$ can be needed to find the minimum.  It is enough to minimise the norm on
\[
    (\alpha+B)\cap\{\gamma:\|\gamma\|\le \|\alpha\|\}.
\]
This set is closed and bounded in a finite-dimensional Euclidean space, hence compact.  The norm is continuous, so the extreme value theorem gives a minimiser.  Choose such a minimum-norm label and call it $\omega$.  Thus
\[
    \omega=\alpha+\dd f_0
\]
for some vertex function $f_0$, and
\[
    \|\omega\|\le \|\alpha+\dd f\|
    \qquad\text{for every }f:V(G)\to\R.
\]

The label $\omega$ is still square-closed: $\alpha$ has sum zero around every $4$-cycle by hypothesis, and every gradient has sum zero around every cycle by telescoping.

We next prove, directly from minimality, that
\begin{equation}\label{eq:div-free}
    (\Div\omega)(x)=0
    \qquad\text{for every }x\in V(G).
\end{equation}
Fix a vertex $x$, and let $\mathbf{1}_x$ be the function that is $1$ at $x$ and $0$ at every other vertex.  For every real number $t$, the label
\[
    \omega+t\,\dd\mathbf{1}_x
\]
still has the form $\alpha+\dd f$, so it lies in the same affine family over which $\omega$ minimises the norm.  Hence the one-variable function
\[
    \Phi(t):=\|\omega+t\,\dd\mathbf{1}_x\|^2
\]
has a minimum at $t=0$.

Let us expand $\Phi(t)$ without using inner-product notation.  If $y\sim x$, then
\[
    \dd\mathbf{1}_x(x,y)=\mathbf{1}_x(y)-\mathbf{1}_x(x)=0-1=-1.
\]
For edges not incident with $x$, the value of $\dd\mathbf{1}_x$ is zero.  Therefore
\[
    \Phi(t)
    =
    \sum_{y\sim x}\bigl(\omega(x,y)-t\bigr)^2
    +
    C,
\]
where $C$ is the contribution of the edges not incident with $x$ and is independent of $t$.  Differentiating this quadratic gives
\[
    \Phi'(t)
    =
    -2\sum_{y\sim x}\bigl(\omega(x,y)-t\bigr),
\]
and hence
\[
    \Phi'(0)
    =
    -2\sum_{y\sim x}\omega(x,y)
    =
    -2(\Div\omega)(x).
\]
Since $\Phi$ has a minimum at $t=0$, we have $\Phi'(0)=0$.  Thus $(\Div\omega)(x)=0$.  Since $x$ was arbitrary, \eqref{eq:div-free} follows.

We now show that $\omega=0$.  Suppose not.  Choose an oriented edge $(u,v)$ with
\[
    \omega(u,v)=W>0
\]
and with $W$ maximal among all absolute edge-labels.  If $u$ has degree $1$, then \eqref{eq:div-free} at $u$ gives $W=0$, a contradiction.  Hence $u$ has degree at least $2$.

By Lemma~\ref{lem:rectagraph-property}, for every $a\in N(u)\setminus\{v\}$ there is a unique vertex $b(a)\in N(v)\setminus\{u\}$ such that
\[
    a-u-v-b(a)-a
\]
is a $4$-cycle.  The same uniqueness applied in the reverse direction shows that
\[
    a\mapsto b(a)
\]
is a bijection from $N(u)\setminus\{v\}$ to $N(v)\setminus\{u\}$.  In particular $u$ and $v$ have the same degree; call it $d$.

For each $a\in N(u)\setminus\{v\}$, the zero sum of $\omega$ around the square $u-v-b(a)-a-u$ gives
\[
    \omega(u,v)+\omega(v,b(a))+\omega(b(a),a)+\omega(a,u)=0.
\]
Equivalently,
\[
    \omega(a,b(a))
    = W+\omega(v,b(a))+\omega(a,u).
\]
Summing this identity over all $a\in N(u)\setminus\{v\}$ gives
\begin{align*}
    \sum_a \omega(a,b(a))
    &= (d-1)W
       +\sum_{b\in N(v)\setminus\{u\}}\omega(v,b)
       +\sum_{a\in N(u)\setminus\{v\}}\omega(a,u).
\end{align*}
Using $(\Div\omega)(v)=0$ gives
\[
    \sum_{b\in N(v)\setminus\{u\}}\omega(v,b)=W,
\]
and using $(\Div\omega)(u)=0$ gives
\[
    \sum_{a\in N(u)\setminus\{v\}}\omega(a,u)=W.
\]
Therefore
\begin{equation}\label{eq:too-large}
    \sum_a \omega(a,b(a))=(d+1)W.
\end{equation}
But the sum on the left of \eqref{eq:too-large} has only $d-1$ terms, and each term has absolute value at most $W$ by the choice of $W$.  Hence its absolute value is at most $(d-1)W$, contradicting \eqref{eq:too-large}.  Thus $\omega=0$.

Since $\omega=\alpha+\dd f_0$ for some $f_0$, we have
\[
    \alpha=\dd(-f_0).
\]
This proves the lemma.
\end{proof}

\section{The colouring label}

Let $G$ be a finite three-colourable $(0,2)$-graph, and fix a proper colouring $c:V(G)\to\Z/3\Z$.  By Corollary~\ref{cor:triangle-free}, $G$ is triangle-free.

Define a real edge-label $\eta$ by
\[
    \eta(x,y)=
    \begin{cases}
        1, & c(y)=c(x)+1\pmod 3,\\
       -1, & c(y)=c(x)-1\pmod 3.
    \end{cases}
\]
This is well-defined because adjacent vertices have distinct colours.

\begin{lemma}\label{lem:eta-square-closed}
The label $\eta$ has sum zero around every $4$-cycle.
\end{lemma}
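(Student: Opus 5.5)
The plan is a direct parity-and-divisibility argument; it needs neither the $(0,2)$ condition nor Corollary~\ref{cor:triangle-free}, only that $c$ is a proper colouring. The key observation is that $\eta$ is an integer lift of the colour differences: for every edge $xy$,
\[
    \eta(x,y)\equiv c(y)-c(x)\pmod 3 .
\]
This holds because $c(y)-c(x)$ is a nonzero element of $\Z/3\Z$, hence equal to $1$ or $-1$, and $\eta(x,y)$ is defined to be exactly that value viewed as an integer.

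Fix a $4$-cycle $x_0-x_1-x_2-x_3-x_0$ and let
\[
    S=\eta(x_0,x_1)+\eta(x_1,x_2)+\eta(x_2,x_3)+\eta(x_3,x_0).
\]
I would argue in three steps.

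\begin{enumerate}
\item Reduce $S$ modulo $3$. The colour differences telescope around the closed walk, so $S\equiv 0\pmod 3$.
\item Use that $S$ is a sum of four terms, each equal to $\pm1$. Hence $S$ is even and $|S|\le 4$, so $S\in\{-4,-2,0,2,4\}$.
\item Combine the two. The only element of $\{-4,-2,0,2,4\}$ divisible by $3$ is $0$, so $S=0$.
\end{enumerate}

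No serious obstacle is expected. The only points needing a line of care are these. First, the sum around a cycle does not depend on the starting vertex, and reversing the direction only negates it, because $\eta(y,x)=-\eta(x,y)$ by definition. Second, the mod-$3$ congruence really is the telescoping of $c$ around the cycle.

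The same argument shows more generally that the $\eta$-sum around any closed walk is divisible by $3$. For length four, the parity and size bound then force the sum to be $0$.
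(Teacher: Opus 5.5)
Your proposal is correct and follows essentially the same route as the paper's proof. Both use the congruence $\eta(x,y)\equiv c(y)-c(x)\pmod 3$, telescope around the $4$-cycle to get divisibility by $3$, and then note that a sum of four values $\pm1$ lies in $\{-4,-2,0,2,4\}$, so it must be $0$.
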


\begin{proof}
Let $v_0v_1v_2v_3v_0$ be a $4$-cycle.  For each $i$ modulo $4$, we claim that
\[
    \eta(v_i,v_{i+1})\equiv c(v_{i+1})-c(v_i)\pmod 3.
\]
This is just the two cases in the definition of $\eta$.  Since the colouring is proper, the colour difference across an edge is either $+1$ or $-1$ in $\Z/3\Z$.  If the difference is $+1$, then $\eta$ is the integer $1$, which represents the same residue modulo $3$.  If the difference is $-1$, then $\eta$ is the integer $-1$, again representing the same residue modulo $3$.  No other case can occur.

It follows that
\[
    \sum_{i=0}^3 \eta(v_i,v_{i+1})
    \equiv
    \sum_{i=0}^3 (c(v_{i+1})-c(v_i))
    \equiv 0
    \pmod 3,
\]
because the colour differences telescope around the closed walk.  The same sum is a sum of four numbers, each equal to $1$ or $-1$, so it lies in
\[
    \{-4,-2,0,2,4\}.
\]
The only number in this set congruent to $0$ modulo $3$ is $0$.  Hence the sum of $\eta$ around the $4$-cycle is zero.
\end{proof}

By Lemma~\ref{lem:least-squares} applied to $\eta$, there is a function $F:V(G)\to\R$ such that
\begin{equation}\label{eq:eta-gradient}
    \eta=\dd F.
\end{equation}

\section{Completion of the proof}

Let $C=v_0v_1\cdots v_{\ell-1}v_0$ be any cycle of $G$.  Summing \eqref{eq:eta-gradient} around $C$ gives
\[
    \sum_{i=0}^{\ell-1}\eta(v_i,v_{i+1})
    =
    \sum_{i=0}^{\ell-1}(F(v_{i+1})-F(v_i))
    =0.
\]
However, each summand is $1$ or $-1$.  If $\ell$ were odd, the sum of these $\ell$ signs could not be zero.  Thus $G$ has no odd cycle.

A finite graph with no odd cycle is bipartite.  Therefore $G$ is bipartite.  This proves Theorem~\ref{thm:main}.

\begin{corollary}
There is no finite $(0,2)$-graph with chromatic number exactly three.
\end{corollary}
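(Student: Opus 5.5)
This corollary should follow from Theorem~\ref{thm:main} once we exclude chromatic number exactly three. Let $G$ be a finite $(0,2)$-graph and suppose, for contradiction, that $\chi(G)=3$. Since $G$ has chromatic number three, it has a proper colouring $c:V(G)\to\Z/3\Z$, so $G$ is a finite three-colourable $(0,2)$-graph.

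Theorem~\ref{thm:main} then says $G$ is bipartite. In detail, it passes through the following steps:
\begin{itemize}
\item Corollary~\ref{cor:triangle-free} shows that $G$ is triangle-free.
\item Lemma~\ref{lem:eta-square-closed} shows that the colouring label $\eta$ is square-closed.
\item Lemma~\ref{lem:least-squares} writes $\eta=\dd F$.
\item Summing $\eta=\dd F$ around any cycle rules out odd cycles.
\end{itemize}

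A bipartite graph has chromatic number at most two: colour the two sides of a bipartition with two colours, or use one colour if there are no edges. This contradicts $\chi(G)=3$. Hence no finite $(0,2)$-graph has chromatic number exactly three.

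I see no genuine obstacle at this stage. All the substantive work, namely the triangle-free reduction and the least-squares argument, is already contained in Theorem~\ref{thm:main}. The only point to note is that the corollary applies only to finite graphs, since the theorem uses finiteness both in Lemma~\ref{lem:p-increases} (through the corollary about directed cycles) and in the compactness and maximum arguments of Lemma~\ref{lem:least-squares}. This matches Payan's observation that infinite three-chromatic $(0,2)$-graphs exist.
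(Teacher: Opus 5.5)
Your proposal is correct and is the paper's argument: $\chi(G)=3$ makes $G$ three-colourable, Theorem~\ref{thm:main} then makes $G$ bipartite, and bipartite graphs have chromatic number at most two. Your recap of the theorem's internal steps and your remark on where finiteness enters are accurate but not needed for the corollary itself.
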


\begin{proof}
If such a graph existed, it would be three-colourable, hence bipartite by Theorem~\ref{thm:main}.  But bipartite graphs have chromatic number at most two.
\end{proof}

\section*{Acknowledgements}

The author thanks the maintainers of the graph-theory open-problems
compilation at \url{https://mlelarge.github.io/graph-conjectures/} and
the Open Problem Garden for making this problem and its surrounding
references easy to locate.

The author also acknowledges substantial assistance from OpenAI's
ChatGPT in investigating the problem, checking the argument, simplifying
the proof, and preparing the exposition.  Responsibility for the final
mathematical content, including any errors, remains with the author.

\end{document}